\documentclass[12pt, letterpaper,reqno]{amsart}
\usepackage{amsthm,amssymb,mathrsfs}
\usepackage{amsmath,amscd}
\usepackage{graphicx}
\usepackage{xcolor}
\usepackage{pinlabel}
\usepackage{fourier}
\usepackage[breaklinks,colorlinks,citecolor=blue,linkcolor=blue,
 urlcolor=teal]{hyperref}

\setlength{\textwidth}{6in}
\setlength{\textheight}{7.9in}
\setlength{\oddsidemargin}{0in}
\setlength{\evensidemargin}{0in}

\theoremstyle{plain}
\newtheorem{theorem}[equation]{Theorem} 

\newtheorem{lemma}[equation]{Lemma}

\theoremstyle{definition}

\renewcommand{\leq}{\leqslant}
\renewcommand{\geq}{\geqslant}
\renewcommand{\epsilon}{\varepsilon}
\renewcommand{\phi}{\varphi}

\renewcommand{\preceq}{\preccurlyeq}

\DeclareMathOperator{\area}{Area}

\numberwithin{equation}{section}

\makeatletter
  \def\tagform@#1{\maketag@@@{%
   \textbf{(\ignorespaces#1\unskip\@@italiccorr)}}}%
   \renewcommand{\eqref}[1]{\textup{\maketag@@@{(\ignorespaces%
        {\ref{#1}}\unskip\@@italiccorr)}}}
\makeatother

\begin{document}

\title[Richard Thompson's group  $T$]{The Dehn function of Richard Thompson's group  $T$}
\author{Kun Wang}
\address{Mathematics Department\\
        Henan University\\
        Kaifeng, 475000, P. R. China\\
        }

\author{Zhu-Jun Zheng}
\address{Mathematics Department\\
       South China University of Technology\\
        Guangzhou 510641\\
        P. R. China}
\author{Junhuai Zhang}

\email{zhengzj@scut.edu.cn}

\begin{abstract}
We improve Guba's result about the Dehn function of R.Thompson's group $T$ and get that $\Phi_T(n)\preceq n^5$, where $\Phi_T$ is the Dehn function of group $T$.
\end{abstract}

\thanks{The second author acknowledges support from NSFC(11571119) and NSFC(11475178).}

\maketitle

\thispagestyle{empty}

\section{introduction}

\bigskip
By $F_s$ we denote the free group on $a_1,a_2,a_3,\ldots,a_s$. Let $N$ be the normal closure of the defining relators $R_1,R_2,\ldots,R_t$ in the free group $F_s$ and $H$ be the quotient group $F_s/N$. For any word $w\in N$, its area is defined by the minimal number
$$\area(w)=\min\{k|w=\prod_{i=1}^k{U_i^{-1}R_{j_i}U_i}\},$$
where $U_i\in F_s,$ $R_{j_i}\in \{R_1,R_2,\ldots,R_t\}^{\pm1}$. Using van Kampen's Lemma \cite{9,10}, the area of $w$ is equal to the smallest number of cells in a van Kampen diagram if its boundary label is $w$. If two words $v,w$ are equal in $H$ then we denote by $\parallel v=w\parallel$ the number $\area(vw^{-1})$. The Dehn function $\Phi$ is the map from $\mathbb{N}$ to itself defined as
$$\Phi(n)=\max\{\area(w)|w\in N,|w|\leq n\},$$
where $|w|$ is the length of the word $w$.

The R.Thompson's group $F$ was discovered by Richard Thompson in the 1960s. The group $F$ is torsion-free, and it can be defined by the following presentation
\begin{equation*}
  \left\langle{x_0,x_1,x_2,\ldots \mid x_j^{x_i}=x_{j+1}(i<j)}\right\rangle,
\end{equation*}
where $x_j^{x_i}=x_i^{-1}x_jx_i$. $F$ is finitely presented, it can be given by
$$\left\langle{x_0,x_1,\mid x_2^{x_1}=x_3,x_3^{x_1}=x_4}\right\rangle,$$
where $x_n=x_0^{-(n-1)}x_1x_0^{n-1}$ for any $n\geq2$.
For Richard Thompson's group $T$, it has the following finite presentation:
\begin{equation*}
\left\langle{x_0,x_1,c_1,\mid x_2^{x_1}=x_3,x_3^{x_1}=x_4,c_1=x_1c_2,c_2x_2=x_1c_3,c_1x_0=c_2^2,c_1^3=1}\right\rangle,
\end{equation*}
where $x_n=x_0^{-(n-1)}x_1x_0^{n-1},c_n=x_0^{-(n-1)}c_1x_1^{n-1}$ for any $n\geq2$.

Let $f$, $g$ be two non-decreasing functions from $\mathbb{N}$ to itself. $f\preceq g$ means that there exists a positive integer constant $C$ such that $f(n)\leq Cg(Cn)+Cn$ for all $n$. $f\sim g$ if and only if $f\preceq g$ and $g\preceq f$.

For a finitely presented group there exists a unique Dehn function, modulo the equivalence relation $\sim$. The group has solvable word problem if and only if its Dehn function has s recursive upper bound \cite{12,13,14}. A finitely presented group is word-hyperbolic group if and only if its Dehn function $\sim n$ \cite{6}.

In 1997, Guba and Sapir \cite{7} proved that the Dehn function $\Phi_F(n)$ of group $F$ is strictly subexponential, namely $\Phi_F(n)\preceq n^{\log n}=2^{\log^2n}$. After one year, Guba \cite{11} proved that the Dehn function of $F$ satisfies $\Phi_F(n)\preceq n^5$. Finally, Guba \cite{1} proved that $\Phi_F(n)\sim n^2$ in 2006.

For R.Thompson's group $T$, Guba \cite{2} proved that $\Phi_T(n)\preceq n^7$. We improve this result as the following:

\begin{theorem}\label{t}
\emph{The Dehn function of R.Thompson's group} $T$ \emph{satisfies polynomial upper bound and} $\Phi_T(n)\preceq n^5$.
\end{theorem}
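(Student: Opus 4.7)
The strategy is to adapt Guba's approach from \cite{2}, which yielded $\Phi_T(n)\preceq n^7$, and to exploit the sharper bound $\Phi_F(n)\sim n^2$ from \cite{1}. Every element of $T$ admits a normal form $u\cdot c_1^{\varepsilon}$ with $u\in F$ and $\varepsilon\in\{0,1,2\}$. Given a word $w$ of length $|w|\leq n$ representing the identity in $T$, the argument will proceed in two phases: first, a rewriting phase in which $w$ is transformed into its normal form using the defining relations of $T$; second, a verification phase in which the resulting $F$-word is killed using the quadratic Dehn function of $F$.

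For the rewriting phase, the crucial rules are $c_1=x_1c_2$, $c_2x_2=x_1c_3$, $c_1x_0=c_2^2$, and $c_1^3=1$, from which one derives commutation rules rewriting each $c_ix_j^{\pm 1}$ as some $x_ac_b$ at constant area cost. These let one push every $c$-generator in $w$ to the right. A careful bookkeeping---tracking how the indices of $c_i$ evolve and how many new $x$-letters are produced at each step---will show that after $O(n^k)$ relator applications, $w$ becomes $u\cdot c_1^{\varepsilon}$ with $|u|=O(n^{\ell})$ for suitable exponents $k,\ell$. Since $w=1$ in $T$ and no nontrivial power of $c_1$ lies in $F$, one must have $\varepsilon=0$ and $u=1$ in $F$. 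The verification phase then applies $\Phi_F(m)\preceq m^2$ to bound $\area(u)$ in $F$ by $O(|u|^2)=O(n^{2\ell})$. Summing the two phases gives $\Phi_T(n)\preceq n^k+n^{2\ell}$; calibrating the rewriting schedule so that each term is $O(n^5)$ yields the theorem.

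The main obstacle is the sharp estimate in the rewriting phase. When $c_i$ is pushed past $x_0^{\pm 1}$, its index may increase, and each such change makes the substitution $c_i=x_0^{-(i-1)}c_1x_1^{i-1}$ contribute an $F$-factor of length $2i-1$; thus the $F$-part $u$ can grow faster than linearly during rewriting, and the induction must simultaneously control index inflation, intermediate word length, and the accumulated relator count. The essential point is to show that this growth, summed over all $c$-letters of $w$, remains polynomial of the right degree. Compared with \cite{2}, one recovers a factor of $n^2$ by plugging the sharper $\Phi_F\sim n^2$ into the verification phase in place of the outdated $\Phi_F\preceq n^5$, and the improvement from $n^7$ to $n^5$ follows once the rewriting bounds are tightened accordingly.
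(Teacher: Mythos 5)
There is a fatal structural error at the start of your plan: the claimed normal form $u\cdot c_1^{\epsilon}$ with $u\in F$ and $\epsilon\in\{0,1,2\}$ does not exist for general elements of $T$. That claim is equivalent to $T=F\cdot\langle c_1\rangle$, i.e.\ to $[T:F]\leq 3$; but $F$ is the stabilizer of the point $0$ in the action of $T$ on the dyadic circle, whose orbit is infinite, so $F$ has infinite index in $T$. Since your rewriting phase is an induction over prefixes of $w$, and intermediate prefixes represent arbitrary elements of $T$, you cannot collect the $c$-letters into a bounded power of $c_1$ at the right end. The correct normal form --- the one used both in \cite{2} and in this paper --- is $p\,c_{n+1}^{m}\,q^{-1}$ with $p,q$ \emph{positive} words in the $x_i$ and with both the index $n+1$ of the torsion element and its exponent $m$ ($0\leq m\leq n+2$) growing linearly in $|w|$; the element $c_{n+1}$ has order $n+3$, and these powers sweep out infinitely many $(F,F)$-double cosets as $n$ grows. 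A second error compounds the first: the commutation rules do not have constant area. Rewriting $c_i x_j$ as $x_a c_b$ requires unpacking $c_i=x_0^{-(i-1)}c_1x_1^{i-1}$ and doing genuine work in $F$; the paper's Lemma \ref{1} shows the cost is $O(n^2)$ for $\parallel c_nx_k=x_{k-1}c_{n+1}\parallel$ and $O(n^3)$ for $\parallel c_nx_0=c_{n+1}^2\parallel$, and Lemma \ref{2} shows that manipulating the powers $c_n^m$ costs $O(n^3)$ per move and $O(n^4)$ for the relation $c_n^{n+2}=1$. With constant-cost moves you would get a bound far better than $n^5$, which should have been a warning sign.

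Your accounting of where the improvement over $n^7$ comes from is also off: the quadratic Dehn function of $F$ from \cite{1} is not plugged into a terminal ``verification phase'' but into the interior of the commutation lemmas (each move of $c_n$ past a generator is certified by a van Kampen diagram whose bulk is an $F$-diagram of boundary length $O(n)$, hence of area $O(n^2)$). The paper's actual argument then runs a single induction on the length of $w$: each of the $n$ letters is absorbed into the normal form $p\,c_{n+1}^{m}\,q^{-1}$ at cost $O(n^4)$ (the worst case being the absorption of a letter $c_1$, handled by Lemma \ref{3}), giving $O(n^5)$ total; at the end, $w=1$ forces $m=0$ and $p=q$ in $F$, which is killed at cost $O(n^2)$. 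To salvage your write-up you would need to replace the normal form, introduce the complexity function $\xi$ (or some substitute) to control the index inflation you correctly identify as the main obstacle, and prove quantitative versions of the commutation rules rather than asserting they are free.
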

\bigskip

\bigskip 
\section{preliminaries}
\bigskip
In this section, we improve some inequalities by using van Kampen diagrams which shall be used to estimate the Dehn function of $T$ in next section.

\begin{lemma}\label{1}
\emph{For any} $0<k\leq n$,

(1)
\begin{equation}\label{L11}
\parallel c_n=x_nc_{n+1}\parallel=1;
\end{equation}

(2)
\begin{equation}\label{L12}
\parallel c_nx_k=x_{k-1}c_{n+1}\parallel=O(n^2);
\end{equation}

(3)
\begin{equation}\label{L13}
\parallel c_nx_0=c_{n+1}^2\parallel=O(n^3).
\end{equation}
\end{lemma}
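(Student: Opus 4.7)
I plan to establish the three bounds in order, using induction and the van Kampen diagram interpretation of area; each part feeds into the proof of the next.

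For part \textup{(1)}, the idea is that $c_1 = x_1 c_2$ is itself a defining relation, so the $n = 1$ word $c_1(x_1c_2)^{-1}$ has area $1$. For $n\geq 2$, once the words $c_n$, $x_n$, $c_{n+1}$ are written out in the generators $x_0, x_1, c_1$, the word $c_n(x_nc_{n+1})^{-1}$ is the conjugate of the $n=1$ relator by $x_0^{n-1}$, so its area is still $1$.

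For part \textup{(2)}, I proceed by induction on $n$. The diagonal case $k = n$ (for $n\geq 2$) is handled by conjugating the defining relator $c_2x_2(x_1c_3)^{-1}$ by $x_0^{n-2}$, yielding area $1$. For $k < n$, my plan is to use the chain
\[ c_n x_k \stackrel{\textup{(1)}}{=} x_{n-1}^{-1} c_{n-1} x_k \stackrel{\text{IH}}{=} x_{n-1}^{-1} x_{k-1} c_n \stackrel{F}{=} x_{k-1} x_n^{-1} c_n \stackrel{\textup{(1)}}{=} x_{k-1} c_{n+1}, \]
where the first and last equalities are single applications of \textup{(1)} (at indices $n-1$ and $n$, respectively), the middle equality is the induction hypothesis \textup{(2)} at $(n-1, k)$ (which requires $k\leq n-1$, i.e.\ $k<n$), and the $F$-step is the commutation $x_{n-1}^{-1} x_{k-1} = x_{k-1} x_n^{-1}$ (valid for $k < n$, with area $O(n)$ in the finite presentation of $F\subset T$, as established in Guba's earlier work on $F$). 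This yields the recurrence $T(n) \leq T(n-1) + O(n)$, whence $T(n) = O(n^2)$. The edge case $n = k = 1$ reduces to a bounded-area computation using the defining relations, and is absorbed into the implicit constant.

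For part \textup{(3)}, I induct on $n$. The base $n = 1$ is the defining relation $c_1x_0 = c_2^2$ (area $1$). For the inductive step I use
\[ c_n x_0 \stackrel{\textup{(1)}}{=} x_{n-1}^{-1} c_{n-1} x_0 \stackrel{\text{IH}}{=} x_{n-1}^{-1} c_n^2 \stackrel{(\ast)}{=} x_{n-1}^{-1} x_{n-1} c_{n+1}^2 = c_{n+1}^2, \]
where $(\ast)$ is the identity $c_n^2 = x_{n-1} c_{n+1}^2$, obtained by writing $c_n \cdot c_n = c_n x_n c_{n+1}$ via \textup{(1)} and then applying \textup{(2)} to $c_n x_n$. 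Because the invocation of \textup{(2)} contributes area $O(n^2)$, the recurrence is $U(n) \leq U(n-1) + O(n^2)$, giving $U(n) = O(n^3)$.

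The main obstacle is part \textup{(2)}: setting up the induction so that the reduction to index $n-1$ actually closes, and verifying that the $F$-commutation $x_{n-1}^{-1} x_{k-1} = x_{k-1} x_n^{-1}$ really does have area $O(n)$ in the finite presentation of $T$. Both ingredients follow from existing results on Thompson's group $F$, but the bookkeeping (and the handling of the small base cases) requires care.
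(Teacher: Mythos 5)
Your part (1) is correct (the cancellation $x_0^{j-1}\cdot x_0^{-j}=x_0^{-1}$ inside $x_nc_{n+1}$ really does exhibit $c_n(x_nc_{n+1})^{-1}$ as the conjugate of $c_1(x_1c_2)^{-1}$ by $x_0^{n-1}$), and part (3) is structurally the same recursion as the paper's ($U(n)\leq U(n-1)+O(n^2)$, using one instance of part (2) per level). The problems are in part (2). First, the diagonal case: $c_nx_n(x_{n-1}c_{n+1})^{-1}$ is \emph{not} a conjugate of the defining relator $c_2x_2(x_1c_3)^{-1}$. Conjugating that relator by $x_0^{n-2}$ yields a relation about $\tilde c_n:=x_0^{-(n-2)}c_2x_0^{n-2}$, whereas $c_n=x_0^{-(n-2)}c_2x_1^{n-2}$; for $n\geq 3$ one has $\tilde c_n^{-1}c_n=x_0^{-(n-2)}x_1^{n-2}\neq 1$ in $T$, so these are different elements and the area-$1$ claim fails. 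The trick works in part (1) only because the tail of $x_n$ cancels into the head of $c_{n+1}$; in $c_nx_n$ the tail $x_1^{n-1}$ of $c_n$ meets the head $x_0^{-(n-1)}$ of $x_n$ and nothing cancels. In the paper this is precisely where the cost sits: the case $k=n$ is reduced to the $F$-identity $x_1^{-(n-1)}x_2x_1^{n-1}=x_n$, a length-$O(n)$ word whose area is $O(n^2)$ by Guba's quadratic Dehn function for $F$.

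Second, and more seriously for the stated bound, your inductive step spends one $F$-commutation $x_{n-1}^{x_{k-1}}=x_n$ at every level and needs each to cost $O(n)$. The only bound that follows from the sources you can cite is $O(n^2)$ per application (the relator has length $O(n)$ over $\{x_0^{\pm1},x_1^{\pm1}\}$ and $\Phi_F\sim n^2$), which closes your recurrence only at $T(n)=O(n^3)$ and hence part (3) only at $O(n^4)$ --- strictly weaker than the lemma asserts. A linear area bound for $x_j^{x_i}=x_{j+1}$ in the finite presentation of $F$ is exactly the kind of claim that needs a precise reference or proof: the natural derivations (expand $x_j$ by a shorter-range relation and recurse) produce recurrences like $D(d)\leq D(d-1)+2D(d-2)+O(1)$, which grow exponentially, not linearly. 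The paper avoids this issue entirely by making a \emph{single} bulk appeal to $\Phi_F\sim n^2$ --- it reduces $c_nx_k=x_{k-1}c_{n+1}$ to the case $k=2$ via one $F$-identity $x_2x_3\cdots x_{n-1}x_1=x_1x_3\cdots x_{n-1}x_n$ of length $O(n)$, paying $O(n^2)$ once rather than $n$ times. Unless you can pin down a genuinely linear bound for the $F$-relations, you should restructure part (2) along those lines.
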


\begin{proof}
For (\ref{L11}), we just use the relation $c_1=x_1c_2$ one time,
$$c_n=x_0^{-(n-1)}c_1x_1^{n-1}=x_0^{-(n-1)}x_1c_2x_1^{n-1}=x_0^{-(n-1)}x_1x_0^{n-1}x_0^{-(n-1)}c_2x_1^{n-1}=x_nc_{n+1}.$$

Now let us prove (\ref{L12}). Let $h(n,k)=\parallel c_nx_k=x_{k-1}c_{n+1}\parallel$. It is easy to see that $h(n,1)=0$, $h(2,2)=1$. At first, we prove the case $n\geq3$, $k=2$. Consider the following van Kampen diagram:
\begin{center}
\includegraphics{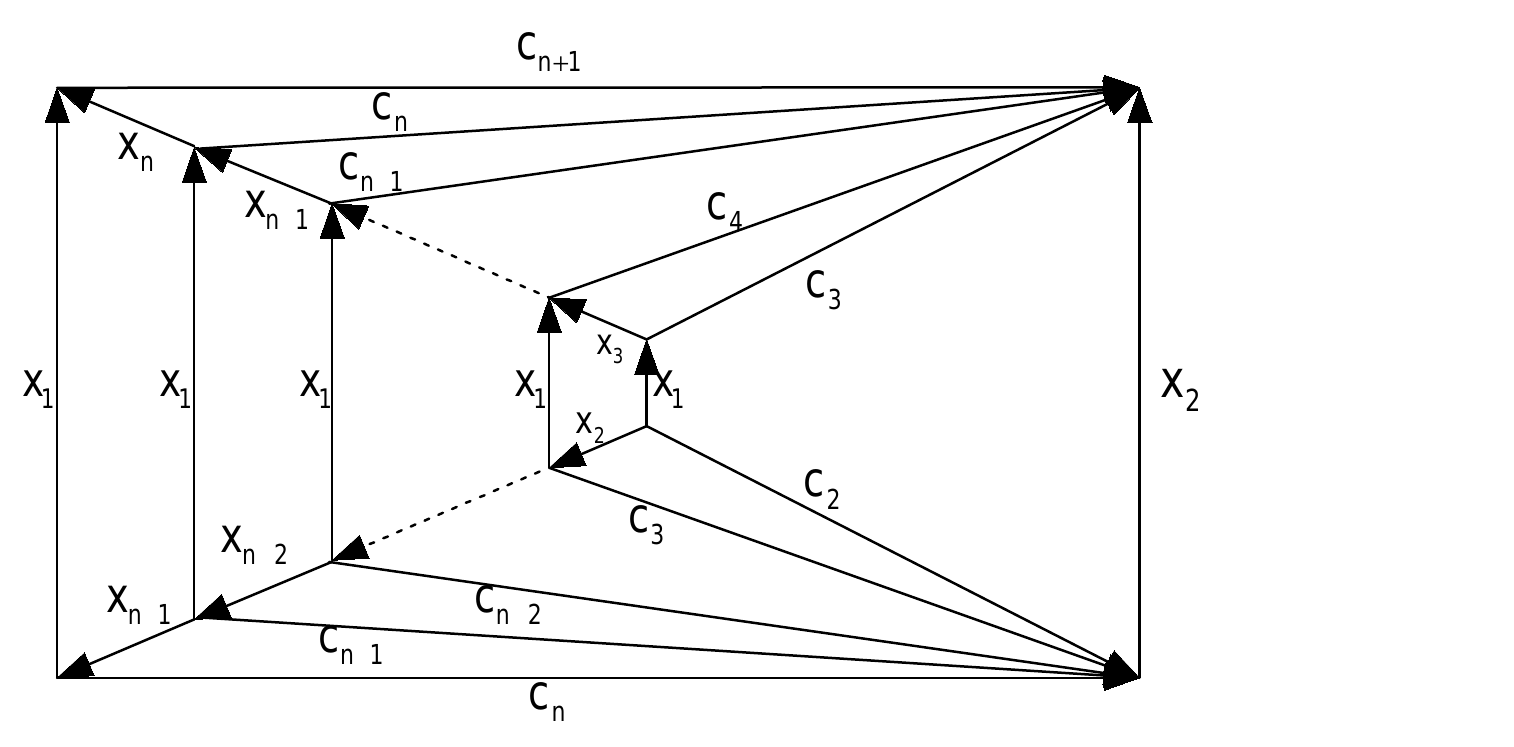}
\end{center}
applying (\ref{L11}) the above diagram shows that
\begin{equation*}
\begin{split}
h(n,2)
&\leq h(2,2)+n-2+n-2+\parallel x_2x_3\cdots x_{n-2}x_{n-1}x_1=x_1x_3\cdots x_{n-1}x_n\parallel\\
&=h(2,2)+2n-4+\parallel x_0^{-1}x_1x_0x_0^{-2}x_1x_0^2\cdots x_1x_0^{n-3}x_0^{-(n-2)}x_1x_0^{n-2}x_1\parallel.
\end{split}
\end{equation*}
By the result of Guba's which says the Dehn function of R.Thompson group $F$ is quadratic. We have

$\parallel x_2x_3\cdots x_{n-2}x_{n-1}x_1=x_1x_3\cdots x_{n-1}x_n\parallel$

$=\parallel x_0^{-1}x_1x_0^{-1}x_1x_0^{-1}\cdots x_1x_0^{-1}x_1x_0^{-1}x_1x_0^{n-2}x_1=x_1x_0^{-2}x_1x_0^{-1}x_1x_0^{-1}\cdots x_1x_0^{-1}x_1x_0^{n-1}\parallel$

$=O(n^2).$

So we get $h(n,2)=O(n^2)$. For $k\geq3$, we consider the following diagram
\begin{center}
\includegraphics{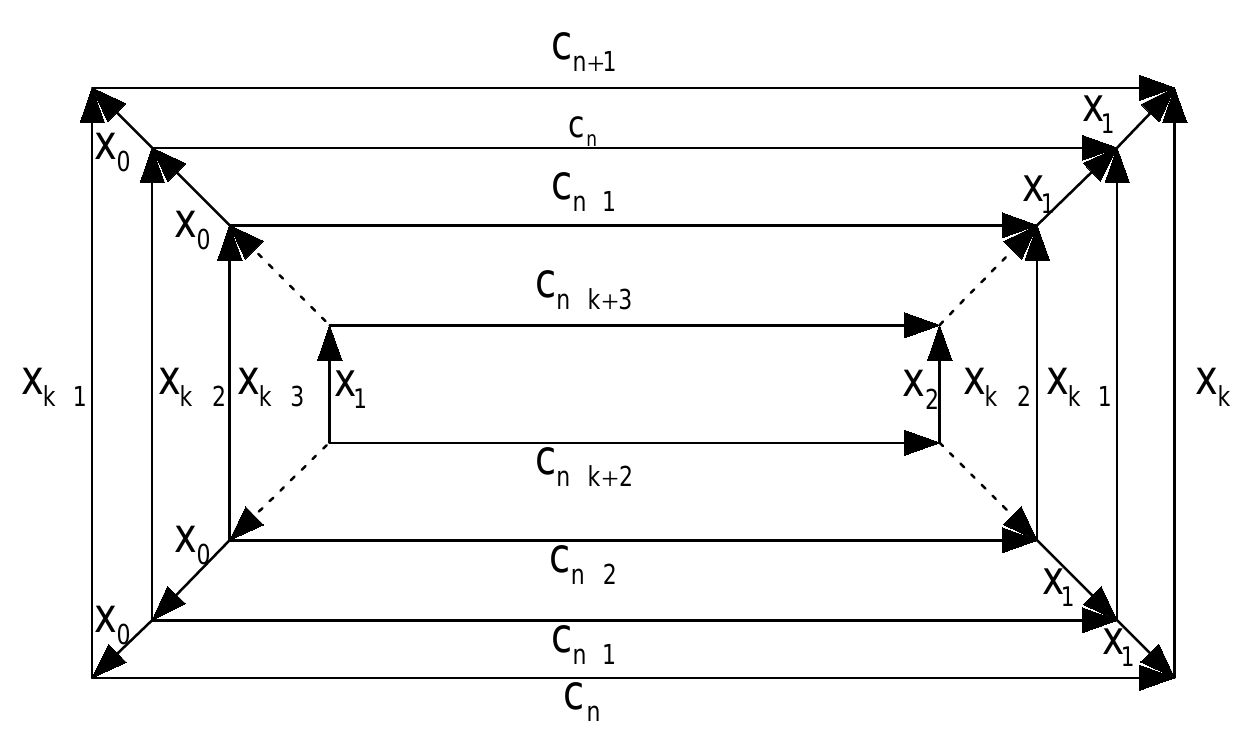}
\end{center}
which shows that $h(n,k)\leq h(n-k+2,2)+\parallel x_1^{-(k-1)}x_2x_1^{k-1}=x_k\parallel=O(n^2)$ by Guba's result.
So we proved (\ref{L12}).

For (\ref{L13}), let $h(n)=\parallel c_nx_0=c_{n+1}^2\parallel$. Clearly, $h(1)=\parallel c_1x_0=c_2^2\parallel=1$. Now let $n\geq2$, consider the following diagram
\begin{center}
\includegraphics{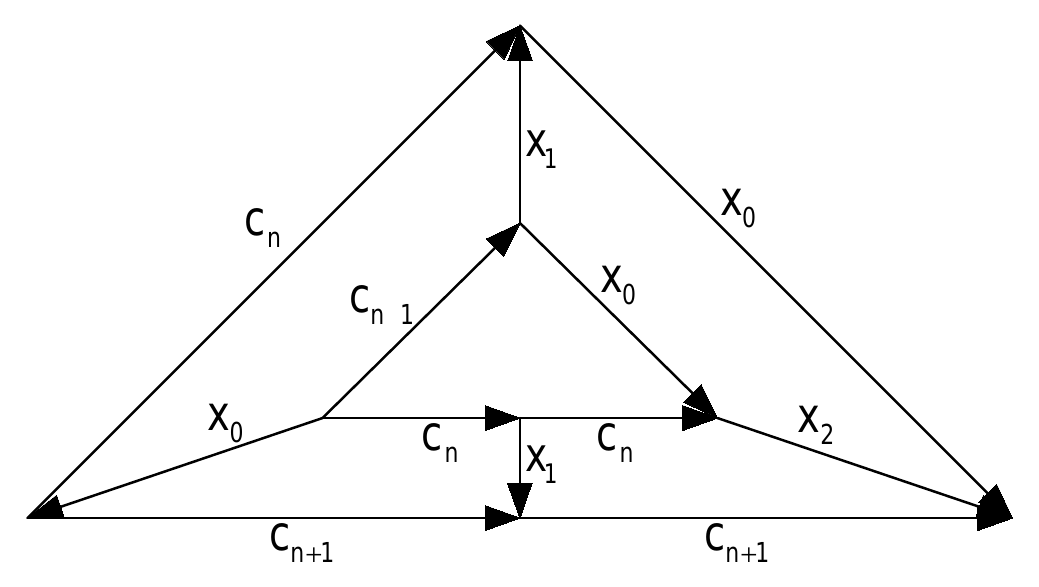}
\end{center}
shows that $h(n)\leq h(n-1)+h(n,2)$. So we have that $h(n)=O(n^3)$, which obtains (\ref{L13}).
\end{proof}

\begin{lemma}\label{2}
\emph{Let} $n\geq1$\emph{,} $1\leq m\leq n+1$, $0\leq r,s\leq n$ \emph{be integers.}

(1)
\begin{equation}\label{L21}
c_n^mx_r=
  \left\{
   \begin{aligned}
   &x_{r-m}c_{n+1}^m,r\geq m,\\
   &c_{n+1}^{m+1}, r=m-1,\\
   &x_{r+n+2-m}c_{n+1}^{m+1}, r<m-1;\\
   \end{aligned}
   \right.
\end{equation}

(2)
\begin{equation}\label{L22}
x_s^{-1}c_n^m=\left\{
   \begin{aligned}
   &c_{n+1}^{m+1}x_{s+m-n-2}^{-1},s\geq n+2-m,\\
   &c_{n+1}^m, s=n+1-m,\\
   &c_{n+1}^m, s\leq n-m;\\
   \end{aligned}
   \right.
\end{equation}

(3)
\begin{equation}\label{L23}
c_n^m=x_{n-m+1}c_{n+1}^m;
\end{equation}

(4)
\begin{equation}\label{L24}
c_n^m=c_{n+1}^{m+1}x_{m-1}^{-1};
\end{equation}

(5)
\begin{equation}\label{L25}
c_n^{n+2}=1.
\end{equation}

\emph{Each of the equalities} (\ref{L21}) (\ref{L22}) (\ref{L23}) (\ref{L24}) \emph{needs} $O(n^3)$ \emph{elementary steps and} (\ref{L25}) \emph{needs} $O(n^4)$ \emph{steps}.
\end{lemma}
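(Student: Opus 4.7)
My plan is to prove the five identities in the order (3), (4), (1), (2), (5). The reason is that (3) and (4) are pure sweep lemmas proved by direct induction on $m$ from Lemma~\ref{1}, and once they are available, both (1) and (2) reduce, in their wrap-around branches, to a bounded sweep of (\ref{L12}) followed by one invocation of (\ref{L13}) and one invocation of (3) or (4) to merge the leftover block of $c$'s. The torsion relation (5) then bootstraps off (3) and (\ref{L13}) by induction on $n$ with base $c_1^3=1$.

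For (3) I induct on $m$. The base $m=1$ is (\ref{L11}), and for the step ($m\geq 2$) I write $c_n^m=c_n\cdot c_n^{m-1}=c_n\cdot x_{n-m+2}\,c_{n+1}^{m-1}$ by the inductive hypothesis and push $c_n$ past $x_{n-m+2}$ using (\ref{L12}); this is legal because $2\leq m\leq n+1$ forces $1\leq n-m+2\leq n$. Each inductive step costs $O(n^2)$ and there are $O(n)$ of them, for a total of $O(n^3)$. The identity (4) is the symmetric induction: the base $m=1$ is (\ref{L13}) at cost $O(n^3)$, and each step pushes $x_{m-2}^{-1}$ past a $c_n$ on the right via the inverted form $x_{k-1}^{-1}c_n=c_{n+1}x_k^{-1}$ of (\ref{L12}) at cost $O(n^2)$, giving $O(n^3)$ overall.

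For (1) I split on the three cases in the statement. When $r\geq m$, iterate $c_n\,x_k\mapsto x_{k-1}c_{n+1}$ a total of $m$ times; the intermediate indices $r-i$ remain in $[1,n]$ because $r\geq m$. When $r=m-1$, perform $m-1$ such pushes to reach $c_n\cdot x_0\cdot c_{n+1}^{m-1}$ and finish with one application of (\ref{L13}). When $r<m-1$, push $x_r$ down by $r$ steps to $c_n^{m-r}x_0\,c_{n+1}^r$, apply (\ref{L13}) once to collapse $c_n\cdot x_0$ into $c_{n+1}^2$, and finally invoke (3) on the surviving $c_n^{m-r-1}$ block to extract the prefactor $x_{r+n+2-m}$ and merge the two $c_{n+1}$ blocks. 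In every branch the cost is bounded by $O(n)\cdot O(n^2)+O(n^3)+O(n^3)=O(n^3)$. The identity (2) is proved by the mirror strategy, using the dual push $x_{k-1}^{-1}c_n=c_{n+1}x_k^{-1}$ and closing each branch with (\ref{L13}) and (4) as appropriate.

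Finally, for (5) I induct on $n$ with base $n=1$ supplied by the relator $c_1^3=1$. The inductive step is the single three-link chain
\[ c_{n-1}^{n+1}=c_{n-1}\cdot c_{n-1}^n=c_{n-1}\cdot x_0\cdot c_n^n=c_n^2\cdot c_n^n=c_n^{n+2}, \]
where the second equality applies (3) (valid since the exponent $n$ equals $(n-1)+1$) and the third applies (\ref{L13}); each such step costs $O(n^3)$, so the recursion $T(n)\leq T(n-1)+O(n^3)$ yields $T(n)=O(n^4)$. The main technical obstacle throughout is simply the bookkeeping: at each step one must check that the $x$-index stays in the admissible range $[1,n]$ for (\ref{L12}) and that the residual $c$-block produced by (\ref{L13}) is cleaned up by whichever of (3) or (4) matches the branch; once this is set, every cost estimate reduces to a routine sum.
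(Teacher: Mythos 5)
Your proof is correct and takes essentially the same route as the paper: both establish each identity by sweeping the commutation relations of Lemma~\ref{1} across the block of $c$'s (at cost $O(n)\cdot O(n^2)=O(n^3)$ per identity, with the wrap-around branches closed by one application of $c_nx_0=c_{n+1}^2$), and both obtain (\ref{L25}) from the recursion $\parallel c_n^{n+2}=1\parallel\leq\parallel c_{n-1}^{n+1}=1\parallel+O(n^3)$. The differences are only organizational --- you prove (\ref{L23}) and (\ref{L24}) first by explicit induction on $m$ and then derive (\ref{L21}) and (\ref{L22}), whereas the paper proves the first cases of (\ref{L21}) via van Kampen diagrams and deduces (\ref{L23}) afterwards --- and the cost bookkeeping is identical.
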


\begin{proof}
For (\ref{L21}), in case $r\geq m$ and consider the following diagram
\begin{center}
\includegraphics{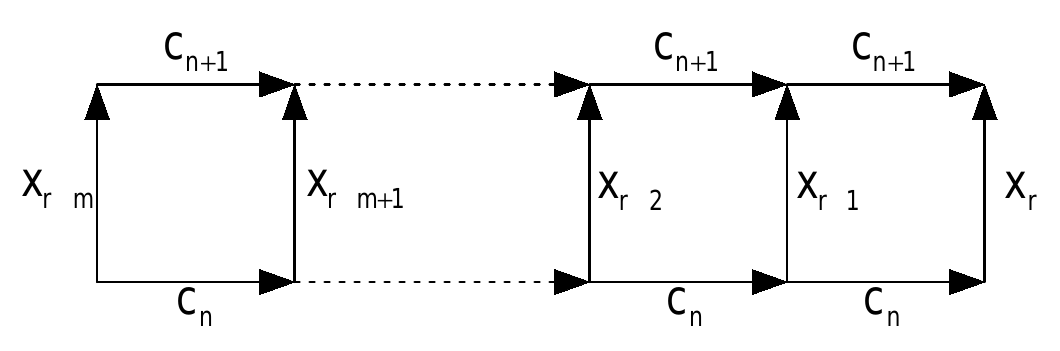}
\end{center}
the word is $c_n^m$ labelled by the bottom path, we see that $\parallel c_n^mx_r=x_{r-m}c_{n+1}^m\parallel=O(n^3)$.

In case $r=m-1$. If $r=0$, then $\parallel c_nx_0=c_{n+1}^2\parallel=O(n^3)$. If $r>0$, then the following diagram

\begin{center}
\includegraphics{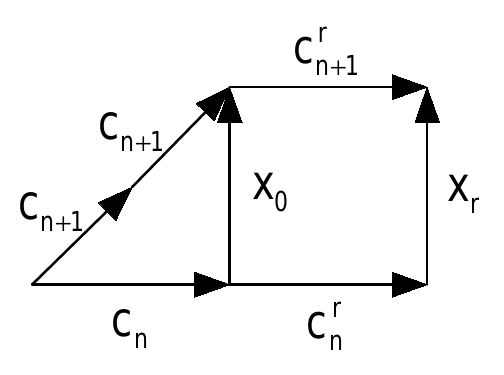}
\end{center}
shows that $\parallel c_n^mx_r=c_{n+1}^{m+1}\parallel=O(n^3)+O(n^3)=O(n^3)$.
For (\ref{L23}),  Lemma \ref{1} and the following diagram

\begin{center}
\includegraphics{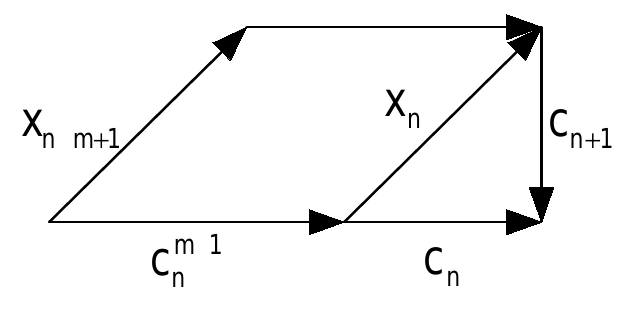}
\end{center}
shows that $\parallel c_n^m=x_{n-m+1}c_{n+1}^m\parallel=O(n^3)$.

Now we return to prove (\ref{L21}), in case $r<m-1$. We consider the following diagram and apply (\ref{L23}) and (\ref{L21})

\begin{center}
\includegraphics{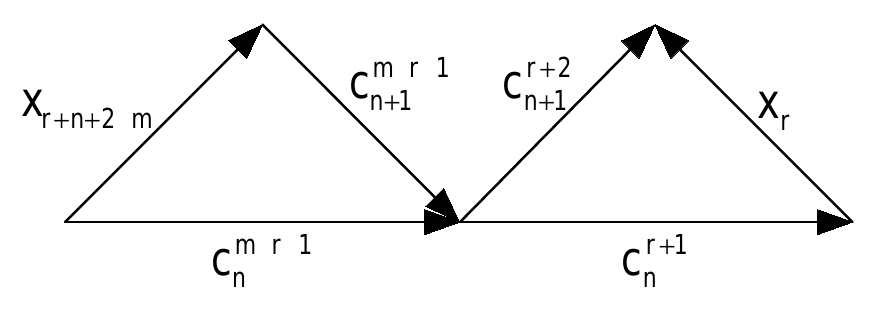}
\end{center}
we have $\parallel c_n^mx_r=x_{r+n+2-m}c_{n+1}^{m+1}\parallel=O(n^3)$.

For (\ref{L22}) and (\ref{L24}), they can be obtained from (\ref{L21}) and (\ref{L23}). In addition, they all need $O(n^3)$ elementary steps.

Finally, let us prove (\ref{L25}). We construct the following diagram
\begin{center}
\includegraphics{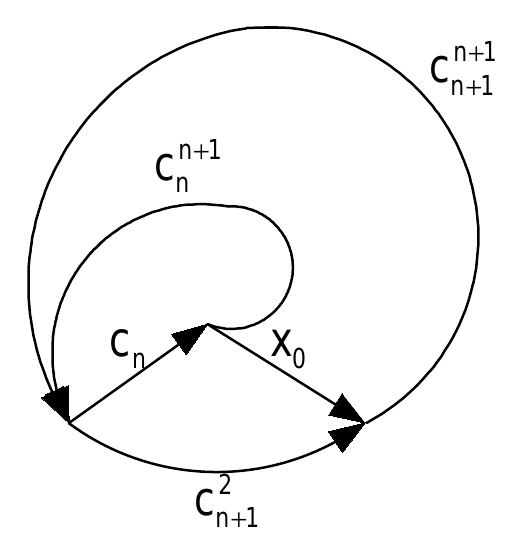}
\end{center}
so $\parallel c_{n+1}^{n+3}=1\parallel\leq\parallel c_n^{n+2}=1\parallel+O(n^3)$, which implies $\parallel c_n^{n+2}=1\parallel=O(n^4)$.
\end{proof}

The definition of the complexity $\xi$ which will be occured in the following lemma can be found in \cite{2} and we improve the corresponding lemma in it.

\begin{lemma}\label{3}
\emph{Let} $v$ \emph{be a positive word}, $n=\xi(v)$. \emph{Then there exist a positive word} $w$ \emph{and an integer} $k(1\leq k\leq n+1)$ \emph{such that} $c_1v$ \emph{equals} $wc_{n+1}^k$ \emph{in} $T$ \emph{and} $\parallel c_1v=wc_{n+1}^k\parallel=O(n^4)$, \emph{where} $\xi(w)\leq\xi(v)$.
\end{lemma}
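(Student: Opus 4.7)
The plan is to induct on $|v|$, processing the letters of $v$ from left to right and using Lemma \ref{2} to push a power of $c$ past each successive letter. During the iteration one maintains the intermediate equality $c_1 v = W c_{j+1}^{k} u$ in $T$, where $v$ factors as (processed prefix)$\cdot u$, $W$ is positive with $\xi(W)\le \xi(v)$, the integer $j$ equals the complexity of the processed prefix, and $1\le k\le j+1$. Initially $(W,j,k,u)=(1,0,1,v)$, so the identity reduces to $c_1 v = c_1 v$; at termination $u$ is empty, $j=n$, $W=w$, giving the desired form.

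The iterative step takes $u = x_r u'$ and applies Lemma \ref{2}(\ref{L21}) with parameters $(n,m)=(j+1,k)$ to rewrite $c_{j+1}^{k} x_r$ as one of $x_{r-k} c_{j+2}^{k}$, $c_{j+2}^{k+1}$, or $x_{r+j+3-k} c_{j+2}^{k+1}$, each at cost $O(n^3)$. If $\xi$ of the processed prefix rises to $j+1$ upon appending $x_r$, the new subscript $j+2$ already equals $(\text{new }j)+1$ and the iteration continues. Otherwise Lemma \ref{2}(\ref{L24}), applied in the form $c_{j+2}^{k'} = c_{j+1}^{k'-1} x_{k'-2}$, reduces the $c$-subscript back to $j+1$ at an additional $O(n^3)$ cost, with the extra positive letter absorbed into the trailing configuration. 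Summing over the $|v|=O(n)$ iterations (for the standard complexity function $\xi$) yields total area $O(n^4)$.

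The main obstacle is preserving the invariant $\xi(W)\le \xi(v)$ throughout the procedure. The third case of (\ref{L21}) introduces a letter $x_{r+j+3-k}$ whose subscript grows with $j$, and a similar issue arises for the letter produced by the adjustment step via (\ref{L24}). Using the constraints $1\le k\le j+1$ from the hypothesis and the bound $r\le \xi(v)$ coming from positivity of $v$, one verifies that every new letter absorbed into $W$ has subscript at most $\xi(v)$. Once this bookkeeping is in place the invariant persists to termination, and the area estimate $O(n^4)$ follows from the per-step cost $O(n^3)$ and the length bound $|v|=O(n)$.
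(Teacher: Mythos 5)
Your left-to-right iteration is just the paper's induction on $|v|$ (which peels off the last letter and pushes the accumulated power of $c$ past it) written in iterative form, so the overall strategy matches. But there is a genuine gap in your iterative step. You apply Lemma~\ref{2}\eqref{L21} with parameters $(n,m)=(j+1,k)$ to move $c_{j+1}^{k}$ past the next letter $x_r$; that statement is only available for $0\le r\le j+1$. The problematic case is precisely $r>j+1$, i.e.\ when the complexity of the processed prefix jumps from $j$ to $r$ rather than to $j+1$ (recall $\xi(ux_r)=\max\{r,\xi(u)+1\}$, so the complexity always rises --- the question is by how much). In that case the identity $c_{j+1}^{k}x_r=x_{r-k}c_{j+2}^{k}$ is simply not one of the equalities \eqref{L21} provides, and it is not valid as written: already $c_nx_{n+1}=x_nx_nc_{n+2}$, not $x_nc_{n+1}$. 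Your ``otherwise'' branch makes this worse by invoking \eqref{L24} to \emph{lower} the subscript of the $c$-power, whereas what is needed is to \emph{raise} it from $j+1$ up to $r$ before the push can be performed. The paper does exactly this (its case $j>n'+1$): it applies \eqref{L23} repeatedly, $c_{n'+1}^{k'}=x_{n'+2-k'}c_{n'+2}^{k'}=\cdots$, until the subscript reaches $r=n$, and only then pushes past $x_n$ using the case $r\ge m$ of \eqref{L21}.

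Fixing this also forces a change in your cost accounting. A single such step may require up to $n$ applications of \eqref{L23}, each of cost $O(n^3)$, so the per-step bound $O(n^3)$ fails; the $O(n^4)$ total survives only by amortization, since the subscript of the $c$-power is nondecreasing over the whole run and increases by at most $n$ in total. Finally, your verification of $\xi(W)\le\xi(v)$ by checking that each absorbed letter has subscript at most $\xi(v)$ is not by itself sufficient, since $\xi$ also grows with the length of $W$ (indeed $\xi(W)\ge|W|$); one must track, as the paper does, the exact recursion $\xi(Wx_i)=\max\{i,\xi(W)+1\}$ through the letters contributed by the subscript-raising steps.
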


\begin{proof}
We prove the lemma by induction on $|v|$, which $|v|$ is the length of the word $v$. It is obviously in the case of $|v|=0$. Now let $|v|>0$. Then we can write $v=v'x_j$ for some $j\geq0$. By the induction assumption, set $n'=\xi(v')$, we have a positive word $w'$, an integer $k'(1\leq k'\leq n'+1)$, such that $\xi(w')\leq n'$ and $\parallel c_1v'=w'c_{n'+1}^{k'}\parallel=O(n^4)$. Consider several cases.

(1) $j>n'+1$. Applying Lemma \ref{2} we have
\begin{equation*}
\begin{split}
c_1v &=c_1v'x_j \\
&=w'c_{n'+1}^{k'}x_j\\
&=w'x_{n'+2-k'}c_{n'+2}^{k'}\\
&=\cdots=w'x_{n'+2-k'}\cdots x_{n-k'}c_n^{k'}x_n\\
&=w'x_{n'+2-k'}\cdots x_{n-k'}x_{n-k'}c_{n+1}^{k'}.
\end{split}
\end{equation*}
We take $w=w'x_{n'+2-k'}\cdots x_{n-k'}x_{n-k'}$. Then we can obtain $\parallel c_1v=wc_{n+1}^{k'}\parallel=O(n^4)+O(n^3)+O(n^4)=O(n^4)$. We now compute the complexity of $w$
\begin{equation*}
\begin{split}
\xi(w)
&=\max\{n-k',\xi(w'x_{n'+2-k'}\cdots x_{n-k'})+1\}\\
&=\max\{n-k'+1,\xi(w'x_{n'+2-k'}\cdots x_{n-k'-1})+2\}\\
&=\max\{n-k'+1,\xi(w'x_{n'+2-k'}\cdots x_{n-k'-2})+3\}\\
&=\cdots=\max\{n-k'+1,\xi(w'x_{n'+2-k'})+n-n'-1\}\\
&=\max\{n-k'+1,\xi(w')+n-n'\}\\
&=n.
\end{split}
\end{equation*}

(2) $j\leq n'+1$. In this case, $n=\xi(v)=\xi(v'x_j)=\max\{j,n'+1\}=n'+1$.

If $j\geq k'$, we take $w=w'x_{j-k'}$, $k=k'$.

If $j=k'-1$, we take $w=w'$, $k=k'+1$.

If $j<k'-1$, then we take $w=w'x_{j+n'+3-k'}$, $k=k'+1$.

It is easy to verify by using Lemma \ref{2} in all these cases that $\xi(w)\leq n$ and $\parallel c_1v=wc_{n+1}^k\parallel=O(n^4)$.
\end{proof}

\bigskip
\section{Proof of theorem \ref{t}}

\bigskip

Let $w$ be a word of length $n$ over the alphabet $\{x_0^{\pm1}x_1^{\pm1},c_1\}$, then we claim that there exists positive words $p,q$ and an integer $m(0\leq m\leq n+2)$ such that $pc_{n+1}^mq^{-1}$ equals $w$ in $T$,
$\xi(p),\xi(q)\leq n$ and $\parallel w=pc_{n+1}^mq^{-1}\parallel=O(n^5)$. If $n=0$, the claim is trivial. Let $n>0$. We write $w=yw'$, where $y=x_s^{\pm1}(s=0,1)$ or $y=c_1$. For $w'$ by the inductive assumption, there exist positive words $p',q'$ of complexity $\leq n-1$ and an integer $m'(0\leq m'\leq n+1)$ such that $w'=p'c_n^{m'}(q')^{-1}$ in $T$.

(1) $y=x_s(s=0,1)$. Let $p=x_sp'$. After a simple calculation we have $\xi(p)=\max\{\xi(p'),|p'|+1\}$. If $m'=0$, then $m=0$, $q=q'$. If $m'\geq1$, then by Lemma \ref{2} $c_n^{m'}=c_{n+1}^{m'+1}x_{m'-1}^{-1}$. In this case, we set $m=m'+1$, $q=q'x_{m'-1}$. $\xi(q)=\max\{m'-1,\xi(q')+1\}\leq n$. And we have the following inequality
$$\parallel w=pc_{n+1}^mq^{-1}\parallel \leq\parallel w'=p'c_n^{m'}(q')^{-1}\parallel+O(n^3).$$

(2)$y=x_s^{-1}(s=0,1)$. By \cite{1}and the lemma in \cite{2},there exist a positive word $p$, integers $t,\delta=0,1$ such that $\parallel x_s^{-1}p'=px_t^{-\delta}\parallel=O(n^2)$, $t\leq |p'|+s$, $\xi(p)\leq\xi(p')+1$. If $m'=0$, we just take $q=q'x_t^\delta$ and $m=0$. Now let $m'>0$. If $\delta=0$, then take $q=q'x_{m'-1}$, $m=m'+1$. So clearly by Lemma \ref{2}, $w=x_s^{-1}w'=x_s^{-1}p'c_n^{m'}(q')^{-1}=qc_{n+1}^{m'+1}x_{m'-1}^{-1}(q')^{-1}=pc_{n+1}^mq^{-1}$. If $\delta=1$.

If $t\geq n+2-m'$, then we take $m=m'+1$, $q=q'x_{t+m'-n-2}$.

If $t=n+1-m'$, then take $m=m'$,$q=q'$.

If $\leq n-m'$, then we take $m=m'$, $q=q'x_{t+m'}$. It is easy to calculate the complexity of $p,q$ $\leq n$ and we have the following inequality
$$\parallel w=pc_{n+1}^mq^{-1}\parallel \leq\parallel w'=p'c_n^{m'}(q')^{-1}\parallel+O(n^3).$$

(3) $y=c_1$. In this case, it is similar to be obtained from \cite{2}, but we have the inequality more precisely $$\parallel w=pc_{n+1}^mq^{-1}\parallel \leq\parallel w'=p'c_n^{m'}(q')^{-1}\parallel+O(n^4).$$
Thus we have proved the claim by induction. Now it is not difficult to get $\parallel w=1\parallel=O(n^5)$, where $w$ is any word of length $\leq n$ on the alphabet $\{x_0^{\pm1},x_1^{\pm1},c_1^{\pm1}\}$ and $w=1$ in $T$. \qed


\begin{thebibliography}{99}

\bibitem{14} Baumslag, G.,Miller, C.F., Short, H., Isoperimetric inequalities and the homology of groups. Invent. Math. \textbf{113}, 531-560(1993).
\bibitem{9} Egbert R. van Kampen., On some lemmas in the theory of groups. \emph{Amer.J.Math.}, 55:268-273, 1933.
\bibitem{13} Gersten, S.,Isoperimetric and isodiametric functions of finite presentations. In: Geometric group theory, Vol.1(Sussex1991). Lond. Math. Soc. Note Ser. \textbf{181},79-96(1993).
\bibitem{7} Guba, V. S. and M.V. Sapir, The Dehn function and a regular set of normal forms for R. Thompson's group $F$, \emph{J.Austral. Math. Soc. Ser.} A, \textbf{62}(1997),315-328.
\bibitem{11} Guba, V. S., Polynomial upper bounds for the Dehn function fo R.Thompson's group F. J.Group Theory \textbf{1},203-211(1998).
\bibitem{2} Guba, V. S., Polynomial Isoperimetric Inequalities for the Richard Thompson's Groups $F$, $T$, and $V$. In: Algorithmic Problems in Groups and Semigroups, ed. by J.-C. Birget et al., pp. 91-120. Boston, Berlin: Birkh\"{a}user 2000.
\bibitem{1} Guba, V. S., The Dehn function of Richard Thompson's group F is quadratic. Invent. math. 163, 313-342(2006).
\bibitem{3} J.W.Cannon, W.J.Floyd, and W.R.Parry, Introductorary notes on Richard Thompson's groups, \emph{Enseign. Math.} (2), \textbf{42}(1996), 215-256.
\bibitem{12} Madlener, K., Otto, F., Pseudo-natural algorithms for the word problem for finitely presented monoids and groups. J. Symb. Comput. \textbf{1}, 383-418(1985).
\bibitem{6} M.Gromov, Hyperbolic groups, in \emph{Essays in Group Theory}, S.Gersten, ed., MSRI Publications 8, Springer-Verlag, Berlin,New York, Heidelberg, 1987,75-263.
\bibitem{10} Noel Brady, Tim Riley and Hamish Short, The Geometry of the Word Problem for Finitely Generated Groups. Advanced Courses in Mathematics CRM Barcelona, Birkh\"{a}user, Basel, 2007. ISBN 3-7643-7949-9.

\end{thebibliography}
\end{document}